\pdfoutput=1
\documentclass[a4paper,12pt,reqno,twoside]{amsart}
\usepackage{amsmath}
\usepackage{amsfonts}
\usepackage{amssymb}
\usepackage{amsthm}
\usepackage{array}
\usepackage{float}
\usepackage{hyperref}% http://ctan.org/pkg/hyperref
\hypersetup{hidelinks}
\usepackage{tabularx}
\usepackage[numbers,sort&compress]{natbib}
\newcolumntype{L}{>{\raggedright\arraybackslash}X}
\newcolumntype{P}[1]{>{\raggedright\arraybackslash}p{#1}}

\numberwithin{equation}{section}
\newtheorem{definition}{Definition}[section]
\newtheorem{theorem}[definition]{Theorem}

\newtheorem{remark}[definition]{Remark}

\newcommand*{\R}{\mathbb{R}}%.............................R
\newcommand{\comment}[1]{}
\title[Unified Boussinesq--Euler boundary-jet system]%
      {A unified Boussinesq--Euler formulation and finite-time blow-up for a Hou--Luo type boundary-jet system} 
\author[Yaoming Shi]{Yaoming Shi}
\address{California, United States}
\email{ymshi@protonmail.com}
\date{May 5, 2026}
\subjclass[2020]{35B44, 35B40, 35Q86, 76B03, 76D05}
\keywords{2D inviscid Boussinesq, 3D axisymmetric Euler with swirl, Hou--Luo model, boundary jet, finite-time blow-up}
\AtBeginDocument{%
\begin{abstract}
We derive a unified vorticity--stream formulation $(Bm)$ for two parity-reduced inviscid systems in the meridian plane: the 2D inviscid Boussinesq equations $(m=1)$ and the 3D axisymmetric Euler equations with swirl $(m=2)$.  In the Boussinesq case we set $\Theta=\vartheta/r$ and write $\Theta=u^2$ only when a smooth square-root branch has been fixed; equivalently, one may keep the scalar variable $\Theta$ throughout.  In the squared radial variable $q=r^2$, the two cases are encoded by the same parameterized system with $m=1,2$.  At the boundary $q=1$, a Taylor expansion gives an exact boundary jet: the transport equations close on the boundary, while the elliptic relation also contains the next normal jet $\varphi_{qq}(x,1,t)$.  If the boundary jet is closed by the first-order Taylor truncation $\varphi_{qq}(x,1,t)=0$, it reduces to a closed unified $(1+1)$D system $(Q0)$ with the local boundary velocity law $u=-(m+2)^{-1}\omega$.  We prove finite-time blow-up for this closed Hou--Luo type model on a periodic interval by a Riccati argument in the spirit of Choi--Hou--Kiselev--Luo--\v{S}ver\'ak--Yao.  The theorem is therefore a blow-up result for the closed boundary-jet model, not for the unrestricted Boussinesq or Euler systems.
\end{abstract}
\maketitle
}
\begin{document}
%\tableofcontents
%..........................................
%
%
%\section{\textbf{Pre-introduction}}

\section[Introduction]{Introduction}
The formation of finite-time singularities for the two-dimensional inviscid Boussinesq equations and for the three-dimensional incompressible Euler equations with swirl remains a central open problem in mathematical fluid dynamics.  This paper isolates a common reduced structure behind these two settings.  Under natural parity assumptions in the meridian plane and after the change of variable
\[
        q=r^2,
\]
we derive a unified vorticity--stream system $(Bm)$.  The case $m=1$ corresponds to the 2D inviscid Boussinesq equations, written either in the scalar variable $\Theta=\vartheta/r$ or, when a smooth square-root branch is fixed, in the notation $\Theta=u^2$, while $m=2$ corresponds to the 3D axisymmetric Euler equations with swirl.  In this formulation the two equations are encoded by the same symbolic system with coefficients depending on $m$.

The second part of the paper studies what $(Bm)$ induces at the outer boundary $q=1$, the setting associated with Hou--Luo type boundary growth.  The Taylor expansion at $q=1$ gives an exact boundary jet.  Its transport equations close on the boundary, but the elliptic relation contains the next normal jet $\varphi_{qq}(x,1,t)$.  Thus the unrestricted boundary trace is not a closed $(1+1)$D system by itself.  If the Taylor jet is closed at first order in the normal variable by imposing
\[
        \varphi_{qq}(x,1,t)=0,
\]
the boundary jet reduces to a closed unified system $(Q0)$.  This system has the same transport equations as the Hou--Luo and CKY models, but its Biot--Savart law is the local algebraic relation
\[
        u=-\frac1{m+2}\omega .
\]
We then prove finite-time blow-up for smooth periodic data satisfying the standard symmetry and sign assumptions used in this class of one-dimensional models.  This blow-up theorem applies to the closed boundary-jet model $(Q0)$, not to the unrestricted boundary trace of $(Bm)$ or to the full Boussinesq/Euler systems.  The same proof also covers the one-parameter linear normal-jet closure discussed in Remark~\ref{rem:linear-normal-jet-closure}.

\paragraph{\textbf{Related work and context.}}
The mathematical literature around singularity formation for inviscid fluids is extensive, and we recall only the works most directly connected with the reductions and blow-up mechanisms used here.  Classical continuation and loss-of-regularity perspectives for the 3D Euler equations include Beale--Kato--Majda~\citep{BKM984} and Constantin~\citep{C1986,C2007}.  For the inviscid 2D Boussinesq system, local theory and conditional breakdown criteria go back to Cannon--DiBenedetto~\citep{CD1980}, Chae--Nam~\citep{ChaeNam1997}, Chae--Kim--Nam~\citep{CKN1999}, and Taniuchi~\citep{Tan2002}; see also Wu's lecture notes~\citep{Wu2012} and the small-scale formation work of Kiselev--Park--Yao~\citep{KPY2022}.  For the axisymmetric Euler geometry, the pressure--velocity and vorticity-building-block viewpoint is naturally compared with the standard Euler and axisymmetric formulations in Majda--Bertozzi~\citep{MB2002}, Hou--Li~\citep{HLi2006}, Chae--Lee~\citep{CL2002}, Drazin--Riley~\citep{DR2006}, and Chen--Fang--Zhang~\citep{CFZ2015}.

Model problems and exactly solvable mechanisms provide an important guide for the present boundary dynamics.  The Constantin--Lax--Majda model~\citep{CLM1985}, the De~Gregorio model~\citep{DeGregorio1990}, Schochet's viscous model analysis~\citep{Schochet1986}, the didactic 2D model of Chae--Constantin--Wu~\citep{CCW2014}, the Boussinesq-type one-dimensional model of Choi--Kiselev--Yao~\citep{CKY2015}, the axisymmetric Euler model of Choi--Hou--Kiselev--Luo--\v{S}ver\'ak--Yao~\citep{CHKLSY2015}, and the Hou--Liu one-dimensional axisymmetric scenario~\citep{HouLuo2014} all illustrate how a reduced stretching law can isolate a finite-time blow-up mechanism.  Recent rigorous PDE singularity and perturbative-stability frameworks include Elgindi--Jeong~\citep{EJ2019,EJ2020}, Chen--Hou~\citep{CH2021,CH2022}, Drivas--Elgindi~\citep{DE2023}, and Elgindi--Pasqualotto~\citep{EP2023}.  Although the present paper does not attempt a full perturbative stability theory, its viewpoint is close in spirit to the broader self-similar and ODE-blow-up stability literature, including Giga--Kohn~\citep{GigaKohn1985}, Giga~\citep{Giga1987}, Merle--Rapha\"el~\citep{MerleRaphael2005}, Rapha\"el--Rodnianski~\citep{RaphaelRodnianski2012}, Collot--Merle--Rapha\"el~\citep{Collot2017}, and Khenissy--Zaag~\citep{Zaag2011}, as well as the local regularity and weighted-estimate tradition represented by Caffarelli--Kohn--Nirenberg~\citep{CKN1982} and Lin~\citep{Lin1998}.

Compared with these works, the present paper uses the squared radial coordinate $(z,q)=(z,r^2)$ to remove the apparent coordinate singularity in the axisymmetric formulation.  The Taylor expansion of $(Bm)$ at $q=1$ then exposes both the exact boundary transport equations and the elliptic obstruction to a closed boundary trace.  The first-order normal-jet closure removes this obstruction and yields a tractable closed system for proving Hou--Luo type finite-time blow-up on the boundary $q=1$.

\medskip
\noindent\textbf{Main results.}
\begin{itemize}
	\item[(i)] We derive the closed parity-reduced subsystems $(B1)$ and $(B2)$ and write them as one unified system $(Bm)$.
	\item[(ii)] We compute the boundary Taylor jet of $(Bm)$ at $q=r^2=1$ and identify the precise normal-jet term that prevents the unrestricted boundary trace from being closed.
	\item[(iii)] Under the first-order normal-jet closure, we obtain the closed unified $(1+1)$D system $(Q0)$ and prove Hou--Luo type finite-time blow-up for smooth periodic data for this closed model.
\end{itemize}

\medskip
\noindent\textbf{Organization.}
Section~\ref{sec:derivation-Bm} derives the unified vorticity--stream formulation $(Bm)$.  Section~\ref{sec:derivation-Q0} derives the boundary jet at $q=r^2=1$ and the closed system $(Q0)$ obtained under the first-order normal-jet closure.  Section~\ref{sec:comparison} compares $(Q0)$ with related one-dimensional models.  Section~\ref{sec:blow-up-proof} proves finite-time blow-up for the periodic version of $(Q0)$.  Section~\ref{sec:conclusion} summarizes the result and the remaining closure issue for the full boundary trace.

\section[The unified system Bm]{The unified vorticity--stream system \texorpdfstring{$(Bm)$}{(Bm)}}\label{sec:derivation-Bm}

We start from the standard vorticity--stream analogy between the 3D axisymmetric Euler equations with swirl, written in cylindrical coordinates $(z,r,\phi)$, and the 2D inviscid Boussinesq equations in the upper half-plane $(z,r)\in\R\times\R^+$.  This analogy is frequently used in the study of singularity formation; see, for example,~\citep{EJ2020,K2018}.

\begin{minipage}{0.42\textwidth} 
	\begin{equation}\label{2DBoussinesq}
		\left\{\begin{aligned}
			&\text{2D inviscid Boussinesq}\\
			& \tfrac{\mathrm{D}}{\mathrm{D}t}\vartheta=0,\quad (z,r)\in\R\times \R^{+}\\
			&\tfrac{\mathrm{D}}{\mathrm{D}t}\tilde\omega=\partial_z\vartheta,\\
			%
%\omega=\partial_x u^y-\partial_y u^x\\
&\tilde\omega
=-\left(\partial_z^2+\partial_r^2\right)\psi,\\
			&\tfrac{\mathrm{D}}{\mathrm{D}t}:=\partial_t +  u^z\partial_z
+u^r\partial_r,\\
			&u^r=-\partial_z\psi,\quad u^z=\partial_r\psi.\\
\end{aligned}\right.
	\end{equation}
\end{minipage}
\begin{minipage}{0.50\textwidth} 
	\begin{equation}\label{3DEuler}
		\left\{\begin{aligned}
			&\text{3D axisymmetric Euler}\\
&\tfrac{\mathrm{D}}{\mathrm{D}t}(rv^\phi)^2=0,\quad\quad (z,r)\in\R\times \R^{+}\\
			& \tfrac{\mathrm{D}}{\mathrm{D}t}\left({\tfrac1r\omega^\phi}\right)=\tfrac{1}{r^4}\partial_z (rv^\phi)^2\\
%
%\omega^\phi=\partial_x v^r-\partial_r v^x\\
&\omega^\phi
=-\left(\partial_z^2 +\partial_r^2+\tfrac{1}{r}\partial_r-\tfrac{1}{r^2}\right)\psi^\phi\\
			&\tfrac{\mathrm{D}}{\mathrm{D}t}:=\partial_t+  v^z\partial_z +  v^r\partial_r,\\
			&v^r=-\tfrac1r\partial_z (r\psi^\phi),	\, v^z=\tfrac1r\partial_r (r\psi^\phi).\\	
		\end{aligned}\right.
	\end{equation}
\end{minipage}\\

It is well known (see, e.g.,~\citep{MB2002}) that the 2D Boussinesq equations~\eqref{2DBoussinesq} share structural features with the 3D axisymmetric Euler equations~\eqref{3DEuler}, at least away from the symmetry axis $r=0$.  Comparing the two systems, $\vartheta$ plays the role of the square of the swirl quantity $rv^\phi$, while the Boussinesq vorticity corresponds to $r^{-1}\omega^\phi$.  The change of variable $q=r^2$ makes this analogy exact at the level of the parity-reduced systems below, without introducing singular factors of $1/r$.

For the 2D inviscid Boussinesq equations we impose the parity ansatz encoded in~\eqref{Boussinesq2}; for the 3D axisymmetric Euler equations we impose the corresponding parity ansatz encoded in~\eqref{Euler2}.\\

	\begin{minipage}{0.48\textwidth}
	\begin{equation}\label{Boussinesq2}
		\left\{
		\begin{aligned}
			\text{2D inviscid}&\text{ Boussinesq}\\
			-\tfrac1ru^r(r)&=:v(r^2),\\
			\tfrac1{r}\vartheta(r)&=:u^2(r^2),\\ 		
			\tfrac1r\tilde\omega(r)&=:\omega(r^2),\\
			\tfrac1r\psi(r)&=:\varphi(r^2),\\
			u^z(r)&=:g(r^2).	
		\end{aligned}
		\right.
	\end{equation}
\end{minipage}
\begin{minipage}{0.48\textwidth}
	\begin{equation}\label{Euler2}
		\left\{\begin{aligned}
			\text{3D }& \text{axisymmetric Euler}\\
			&v(r^2):=-\tfrac1rv^r(r),\\
			&u(r^2):=\tfrac1rv^\phi(r),\\
			&\omega(r^2):=\tfrac1r\omega^\phi(r), \\
			&\varphi(r^2):=\tfrac1r\psi^{\phi}(r),\\
			&g(r^2):=v^z(r).\\
		\end{aligned}\right.
	\end{equation}
\end{minipage}\\

For $m=1$, set $\Theta:=\vartheta/r$.  The notation $\Theta=u^2$ is used only when a smooth square-root branch has been fixed; otherwise one may keep $\Theta$ as the scalar variable.  In the $\Theta$-formulation,
\[
        \frac{\mathrm D}{\mathrm Dt}\Theta=v\Theta,
        \qquad
        \frac{\mathrm D}{\mathrm Dt}\omega=\Theta_z+v\omega .
\]
When $\Theta=u^2$, the first equation is equivalent to $\frac{\mathrm D}{\mathrm Dt}u=\frac12uv$.  This is the only role of the square-root notation in the Boussinesq case.

We can now state the unified formulation.
\begin{theorem}\label{Babc}
Assume the parity reductions encoded in~\eqref{Boussinesq2} for the 2D Boussinesq equations and in~\eqref{Euler2} for the 3D axisymmetric Euler equations with swirl.  In the Boussinesq case, assume also that $\Theta:=\vartheta/r$ is represented on a smooth square-root branch $\Theta=u^2$; equivalently, one may use the scalar $\Theta$-formulation described above.  Then both reductions are transformed, in the square-root notation, into the unified system

\begin{equation}\label{3DEuler2}
		(Bm):\left\{\begin{aligned}
			&\tfrac{\mathrm{D}}{\mathrm{D}t}u\,  
			=\tfrac{m^2}{2}uv,\quad\quad(z,q)\in\R\times\R^+\\
			&\tfrac{\mathrm{D}}{\mathrm{D}t}\omega
			=(u^2)_z+(2-m)v\omega\\
			&\omega=-\left(\partial_{z}^2+4q \partial_{q}^2  + (4+2m) \partial_q\right)\varphi\\
			%
			%\Omega=\alpha_x -1u_s\\
			&v=\varphi_z,\quad g=m\varphi+2q\varphi_q,\\
			&\tfrac{\mathrm{D}}{\mathrm{D}t}:=\partial_t+g\partial_z-2vq\partial_q.
		\end{aligned}\right.
	\end{equation}

Here $(B1)$ is the reformulated 2D Boussinesq system, and $(B2)$ is the reformulated 3D axisymmetric Euler system.

\end{theorem}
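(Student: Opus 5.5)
The proof is a direct change of variables, done separately for $m=1$ and $m=2$, followed by matching coefficients against \eqref{3DEuler2}. The only tools are the chain rule for $q=r^2$,
\[
\partial_r=2r\,\partial_q,\qquad \partial_r^2=2\partial_q+4q\,\partial_q^2,
\]
and the product rule applied inside the material derivative. The ordering is: first the velocity and stream relations (which produce $v$, $g$ and the transport operator), then the elliptic relation, and last the two transported quantities.

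\textbf{Step 1: kinematics.} In the Boussinesq case, \eqref{Boussinesq2} gives $\psi=r\varphi(z,r^2)$. Then
\[
u^r=-\psi_z=-r\varphi_z,
\]
so $v=\varphi_z$, and
\[
u^z=\psi_r=\varphi+2q\varphi_q,
\]
which is $g$ with $m=1$. In the Euler case, \eqref{Euler2} gives $r\psi^\phi=r^2\varphi=q\varphi$. Hence
\[
v^r=-\tfrac1r\partial_z(q\varphi)=-r\varphi_z \quad\text{and}\quad v^z=\tfrac1r\partial_r(q\varphi)=2\varphi+2q\varphi_q,
\]
which is $g$ with $m=2$. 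In both cases the radial advection term becomes
\[
u^r\partial_r=(-rv)(2r\partial_q)=-2qv\,\partial_q,
\]
so $\tfrac{\mathrm D}{\mathrm Dt}$ takes the common form stated in $(Bm)$.

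\textbf{Step 2: elliptic relation.} For Boussinesq, differentiate $\psi_r=\varphi+2q\varphi_q$ once more to get
\[
\psi_{rr}=r\,(6\varphi_q+4q\varphi_{qq}).
\]
Dividing $\tilde\omega=-(\psi_{zz}+\psi_{rr})$ by $r$ gives the $(Bm)$ operator with $4+2m=6$.

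For Euler, the extra terms add $\tfrac1r(r\varphi)_r-\tfrac1r\varphi$. Since $(r\varphi)_r=\varphi+2q\varphi_q$, this equals $2r\varphi_q$, so the coefficient of $\partial_q$ becomes $8=4+2m$. Every term carries an overall factor $r$, so dividing by $r$ gives exactly the stated operator. Confirming that no stray $1/r$ survives here is the point of using $q$.

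\textbf{Step 3: transport equations.} The main care is needed in the stretching terms, because the weights $r$ and $q$ are not material invariants: one has $\tfrac{\mathrm D}{\mathrm Dt}r=u^r=-rv$ and $\tfrac{\mathrm D}{\mathrm Dt}q=-2qv$.

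For Boussinesq, $\vartheta=r\Theta$ gives
\[
0=r\,\tfrac{\mathrm D}{\mathrm Dt}\Theta-rv\Theta,
\]
so $\tfrac{\mathrm D}{\mathrm Dt}\Theta=v\Theta$. On the fixed smooth branch $\Theta=u^2$, this becomes $\tfrac{\mathrm D}{\mathrm Dt}u=\tfrac12uv=\tfrac{m^2}{2}uv$. Likewise $\tilde\omega=r\omega$ gives
\[
r\,\tfrac{\mathrm D}{\mathrm Dt}\omega-rv\omega=\partial_z(r\Theta),
\]
so $\tfrac{\mathrm D}{\mathrm Dt}\omega=(u^2)_z+v\omega$, and indeed $2-m=1$.

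For Euler, $rv^\phi=qu$. Then $\tfrac{\mathrm D}{\mathrm Dt}(qu)^2=0$ gives $\tfrac{\mathrm D}{\mathrm Dt}(qu)=0$ wherever $qu\neq0$, and by the smooth identity $q\,\tfrac{\mathrm D}{\mathrm Dt}u-2qvu=0$ otherwise. This yields $\tfrac{\mathrm D}{\mathrm Dt}u=2uv=\tfrac{m^2}{2}uv$. Finally,
\[
\tfrac{\mathrm D}{\mathrm Dt}\omega=r^{-4}\partial_z(q^2u^2)=(u^2)_z,
\]
which matches $(2-m)v\omega=0$.

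I expect no conceptual obstacle. The two places needing care are:
\begin{itemize}
\item the sign and factor in the stretching terms coming from $\tfrac{\mathrm D}{\mathrm Dt}r$;
\item the legitimacy of passing from $\Theta$ to $u$ (for $m=1$) and from $(qu)^2$ to $qu$ (for $m=2$).
\end{itemize}
The hypothesis of a smooth square-root branch, or alternatively keeping the $\Theta$-form, is exactly what handles the second point.
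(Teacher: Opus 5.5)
Your proposal is correct and follows the same route as the paper: a case-by-case change of variables $q=r^2$, where $\tfrac{\mathrm D}{\mathrm Dt}r=-rv$ and $\tfrac{\mathrm D}{\mathrm Dt}q=-2qv$ generate the stretching terms, and you write out the chain-rule computations (for $\psi_{rr}$ and the extra Euler terms $2r\varphi_q$) that the paper only asserts. The one small difference is in the Euler swirl equation: the paper directly uses that the signed quantity $rv^\phi=qu$ is transported, whereas you recover $\tfrac{\mathrm D}{\mathrm Dt}(qu)=0$ from the squared law. That step is sound once made explicit: it holds on $\{u\neq0\}$, extends by continuity to its closure, and holds trivially on the interior of $\{u=0\}$.
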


\begin{proof}[Proof of Theorem~\ref{Babc}]
Set $q=r^2$.  In both cases the radial velocity has the form
\[
        u^r=-rv \quad \text{or} \quad v^r=-rv,
        \qquad \frac{\mathrm Dq}{\mathrm Dt}=-2qv,
\]
and the axial velocity is
\[
        g=m\varphi+2q\varphi_q,
        \qquad v=\varphi_z.
\]
Hence the material derivative becomes
\[
        \frac{\mathrm D}{\mathrm Dt}
        =\partial_t+g\partial_z-2vq\partial_q .
\]
For the Boussinesq case, $m=1$, the definitions give
$\vartheta=ru^2$, $\tilde\omega=r\omega$, and $\psi=r\varphi$.  Therefore
\[
        \frac{\mathrm D}{\mathrm Dt}(ru^2)=0
        \quad\Longrightarrow\quad
        \frac{\mathrm D}{\mathrm Dt}(u^2)=vu^2,
\]
which is equivalently $\frac{\mathrm D}{\mathrm Dt}u=\frac12uv$ when the chosen square-root branch is used.  Moreover,
\[
        \frac{\mathrm D}{\mathrm Dt}(r\omega)=\partial_z(ru^2)
        \quad\Longrightarrow\quad
        \frac{\mathrm D}{\mathrm Dt}\omega=(u^2)_z+v\omega .
\]
Finally,
\[
        -\frac1r(\partial_z^2+\partial_r^2)(r\varphi)
        =-\bigl(\partial_z^2+4q\partial_q^2+6\partial_q\bigr)\varphi .
\]
This is exactly \eqref{3DEuler2} with $m=1$.

For the axisymmetric Euler case, $m=2$, the definitions give
$rv^\phi=q u$, $\omega^\phi=r\omega$, and $\psi^\phi=r\varphi$.  Using the signed transported angular-momentum variable $rv^\phi=qu$ gives
\[
        \frac{\mathrm D}{\mathrm Dt}(qu)=0
        \quad\Longrightarrow\quad
        \frac{\mathrm D}{\mathrm Dt}u=2uv.
\]
The vorticity equation becomes
\[
        \frac{\mathrm D}{\mathrm Dt}\omega
        =\frac1{r^4}\partial_z(q^2u^2)=(u^2)_z,
\]
and the elliptic identity is
\[
        -\frac1r\left(\partial_z^2+\partial_r^2+\frac1r\partial_r-\frac1{r^2}\right)(r\varphi)
        =-\bigl(\partial_z^2+4q\partial_q^2+8\partial_q\bigr)\varphi .
\]
This is \eqref{3DEuler2} with $m=2$.  Combining the two cases gives the stated unified system.
\end{proof}

\begin{remark}
The system~\eqref{3DEuler2} contains no coordinate singularity factor $1/r=1/\sqrt q$.

For the 3D axisymmetric Euler case, the usual pole conditions at $r=0$ are automatic once the transformed variables are smooth and bounded up to $q=0$.  Indeed, for $q=r^2$ one has
\begin{equation}\label{BC1}
\begin{aligned}
        v^r(z,r)&=-r v(z,q),\\
        v^\phi(z,r)&=r u(z,q),\\
        \partial_r v^z(z,r)&=2r\partial_q g(z,q).
\end{aligned}
\end{equation}
Hence $v^r$, $v^\phi$, and $\partial_r v^z$ vanish at the symmetry axis.  Moreover,
\begin{equation}\label{BC2B}
\begin{aligned}
\omega^\phi(z,r)
&=\partial_z v^r(z,r)-\partial_r v^z(z,r)\\
&=r\left(-\partial_z v(z,q)-2\partial_q g(z,q)\right)\\
&=r\omega(z,q),
\end{aligned}
\end{equation}
where the last identity is exactly the axisymmetric elliptic relation in the variables $(z,q)$.  Thus $\omega^\phi(z,0)=0$ as well.  No additional pole condition is imposed on $\omega$, $u$, or $\partial_z\varphi$ beyond regularity in the transformed variables.
\end{remark}

\section[The boundary-jet system Q0]{The boundary-jet system \texorpdfstring{$(Q0)$}{(Q0)} at \texorpdfstring{$q=1$}{q=1}}\label{sec:derivation-Q0}

We now place \eqref{3DEuler2} on the strip $(x,q)\in\mathbb R\times[0,1]$ and impose the no-flow condition on the outer boundary $q=1$:
\begin{equation}\label{3DEuler2DBoussinesqVS}
\left\{
\begin{aligned}
&\frac{\mathrm D}{\mathrm Dt}u=\frac{m^2}{2}vu,\qquad (x,q)\in\mathbb R\times[0,1],\\
&\frac{\mathrm D}{\mathrm Dt}\omega=(u^2)_x+(2-m)v\omega,\\
&\omega=-\left(\partial_x^2+4q\partial_q^2+(4+2m)\partial_q\right)\varphi,\\
&v=\varphi_x,\qquad g=m\varphi+2q\varphi_q,\\
&\frac{\mathrm D}{\mathrm Dt}=\partial_t+g\partial_x-2vq\partial_q,\\
&v(x,1,t)=0.
\end{aligned}\right.
\end{equation}

Near $q=1$ write the boundary Taylor expansion
\begin{equation}\label{fghqExpansion2b}
\left\{
\begin{aligned}
\omega(x,q,t)&=\omega(x,t)+(q-1)\omega^{(1)}(x,t)+O((q-1)^2),\\
u(x,q,t)&=\rho(x,t)+(q-1)\rho^{(1)}(x,t)+O((q-1)^2),\\
\varphi(x,q,t)&=\phi^{(0)}(x,t)+(q-1)\phi(x,t)+\frac12(q-1)^2\phi^{(2)}(x,t)+O((q-1)^3).
\end{aligned}\right.
\end{equation}
The no-flow condition gives
\begin{equation}\label{uDef}
        v(x,1,t)=\phi^{(0)}_x(x,t)=0.
\end{equation}
Thus $\phi^{(0)}$ is independent of $x$; using the standard boundary stream-function normalization, we set $\phi^{(0)}=0$.  Consequently,
\begin{equation}\label{boundary-g}
        g(x,1,t)=2\phi(x,t).
\end{equation}
Restricting the two transport equations in \eqref{3DEuler2DBoussinesqVS} to $q=1$ gives
\begin{equation}\label{boundary-transport}
\left\{
\begin{aligned}
&\omega_t+2\phi\omega_x=(\rho^2)_x,\\
&\rho_t+2\phi\rho_x=0.
\end{aligned}\right.
\end{equation}
The elliptic equation gives the boundary-jet relation
\begin{equation}\label{boundary-elliptic-jet}
        \omega+4\phi^{(2)}+2(m+2)\phi=0.
\end{equation}
Thus the unrestricted boundary trace is not closed unless the next normal jet $\phi^{(2)}=\varphi_{qq}(x,1,t)$ is also specified.  The closed Hou--Luo type boundary system studied below is obtained by the first-order normal-jet closure
\begin{equation}\label{boundary-linear-closure}
        \phi^{(2)}(x,t)=0,
\end{equation}
that is, by truncating the normalized stream-function Taylor expansion after its first nontrivial normal derivative $\varphi_q(x,1,t)$.  Under this closure, \eqref{boundary-transport}--\eqref{boundary-elliptic-jet} reduce to
\begin{equation}\label{EB2c}
\left\{
\begin{aligned}
&\omega_t+2\phi\omega_x=2\rho\rho_x,\\
&\rho_t+2\phi\rho_x=0,\\
&\omega+2(m+2)\phi=0.
\end{aligned}\right.
\end{equation}
Setting $U=2\phi$ and $\theta=\rho^2$, and then renaming $U$ as $u$, gives the unified closed $(1+1)$D system
\begin{equation}\label{eq:Q0}
\left\{
\begin{aligned}
&\omega_t+u\omega_x=\theta_x,\\
&\theta_t+u\theta_x=0,\\
&u=-\frac1{m+2}\omega.
\end{aligned}\right.
\end{equation}

\begin{remark}[A linear normal-jet closure]\label{rem:linear-normal-jet-closure}
The first-order normal-jet closure \eqref{boundary-linear-closure} is the case $a=0$ of the more general linear closure
\begin{equation}\label{boundary-a-closure}
        \phi^{(2)}(x,t)=a\phi(x,t),\qquad a\in\mathbb R,
\end{equation}
that is, $\varphi_{qq}(x,1,t)=a\varphi_q(x,1,t)$.
Then \eqref{boundary-elliptic-jet} becomes
\[
        \omega+2(2a+m+2)\phi=0 .
\]
Thus, after setting $U=2\phi$ and $\theta=\rho^2$, the same two transport equations close with the local law
\[
        U=-\frac1{2a+m+2}\omega .
\]
If $2a+m+2>0$, this is the same one-dimensional system as \eqref{eq:Q0}, with the coefficient $(m+2)^{-1}$ replaced by $(2a+m+2)^{-1}$.  The blow-up proof in Section~\ref{sec:blow-up-proof} applies verbatim to this one-parameter family.  We keep $a=0$ as the main case because it is the simplest Taylor truncation of the boundary jet.
\end{remark}

\section{Comparison}\label{sec:comparison}

Choi--Hou--Kiselev--Luo--\v{S}ver\'ak--Yao~\citep{CHKLSY2015} gave a detailed summary of several one-dimensional models in the literature and explained how these models were proposed and evolved.  Table~\ref{visina9} follows that comparison: rows 1--6 summarize earlier models, including the Hou--Luo boundary scenario~\citep{LH2014a,LH2014b}, while row 7 adds the closed boundary-jet system \eqref{eq:Q0}.
 
\begin{table}[H]
	\centering
	\small
	\begin{tabularx}{\linewidth}{|P{0.025\linewidth}|P{0.30\linewidth}|P{0.145\linewidth}|P{0.20\linewidth}|P{0.205\linewidth}|} 
		\hline
		&Model&{\small Biot-Savart law}&{\small Dynamical equations}& {\small Regularity of the solutions}\\
		\hline		
1&{\small Constantin et al. (1985) ~\citep{CLM1985},
	{\small analogy of vortex stretching without transport term}}
&$u_x=H\omega$  &$\omega_{t}=u_x\omega $ &{\small finite-time blow-up from smooth data }\\
\hline
2&{\small De Gregorio (1990) ~\citep{DeGregorio1990},
3D Euler analogy}
&$u_x=H\omega$  &$\omega_{t}+u\omega_{x}=u_x\omega $  &{\small global existence and regularity unknown}\\
\hline
3&{\small Cordoba et al. (2005) ~\citep{CCF2005}, SQG analogy}
&$u=H\omega$  &$\omega_{t}+u\omega_{x}=0$ & finite-time blow-up from smooth data \\
\hline
4&{\small Okamoto et al. (2008) ~\citep{OSW2008},
	a generalized De Gregorio model}
&$u_x=H\omega$  &$\omega_{t}+au\omega_{x}=u_x\omega$ & finite-time blow-up from smooth data \\
\hline
	5&{\small HL-model, Hou and Luo (2013) ~\citep{HL2013},
	2D Boussinesq/3D axisymmetric
	Euler analogy}&$u_x=H\omega$&$\omega_{t}+u\omega_{x}=\theta_x$,\quad$\theta_{t}+u\theta_{x}=0$& {\small finite-time blow-up from
			smooth data}\\
		\hline		
		6&{\small CKY-model, Choi et al. (2015) ~\citep{CKY2015},
		simplified HL-model}
	&$-u(x)=x\int_x^\infty\tfrac{\omega(y)}{y}\mathrm{d}y$&$\omega_{t}+u\omega_{x}=\theta_x$,\quad$\theta_{t}+u\theta_{x}=0$& {\small finite-time blow-up from
			smooth data}\\
		\hline
		7&{\small {Our unified boundary-jet 1D system }{\small for Hou--Luo type blow-up on the boundary}} &$u=-\tfrac{1}{m+2}\omega$; (Q0):~\eqref{eq:Q0}&
		$\omega_t+u\omega_{x}=\theta_x$,\quad$\theta_t+u\theta_{x}=0$& {\small finite-time blow-up from
			smooth data}\\
\hline
	\end{tabularx}
	\caption{Comparison of the unified boundary-jet system in row 7 with one-dimensional models summarized by Choi et al.~\citep{CHKLSY2015}.}\label{visina9}
\end{table}
\begin{remark}
The main structural difference between the unified boundary-jet system in row 7 and the models in rows 1--6 is the form of the Biot--Savart law.  
	
More precisely, in the boundary-jet derivation of \eqref{eq:Q0}, the local relation $u=-\frac1{m+2}\omega$ comes from the elliptic boundary-jet relation \eqref{boundary-elliptic-jet} together with the closure \eqref{boundary-linear-closure}.  Under the generalized closure in Remark~\ref{rem:linear-normal-jet-closure}, the same derivation replaces $m+2$ by $2a+m+2$.  By contrast, the earlier models use nonlocal Biot--Savart laws. 

\end{remark}

\section[Finite-time blow-up]{Finite-time (Hou--Luo type) blow-up on the boundary in the periodic case}\label{sec:blow-up-proof}

In this section we prove finite-time blow-up for the closed system \eqref{eq:Q0} on a periodic interval of length $L$:
\begin{equation}\label{likeCKY}
\left\{
\begin{aligned}
&\omega_t+u\omega_x=\theta_x,
        \qquad x\in\mathbb R/L\mathbb Z,
        \quad t\in[0,T),\\
&\theta_t+u\theta_x=0,\\
&u=-c\omega,
        \qquad c>0.
\end{aligned}\right.
\end{equation}
For \eqref{eq:Q0}, $c=(m+2)^{-1}$.  For the generalized linear closure \eqref{boundary-a-closure}, $c=(2a+m+2)^{-1}$, and the condition $2a+m+2>0$ is exactly the positivity condition $c>0$.

The system has the same dimensional scaling as the inviscid Euler equations,
\begin{equation}\label{likeCKY2}
\omega(x,t)\mapsto \frac\mu\lambda\omega(\lambda x,\mu t),\qquad
u(x,t)\mapsto \frac\mu\lambda u(\lambda x,\mu t),\qquad
\theta(x,t)\mapsto \frac{\mu^2}{\lambda^2}\theta(\lambda x,\mu t),
\end{equation}
and it is invariant under adding a constant to $\theta$.

Using $u=-c\omega$, the first equation in \eqref{likeCKY} is equivalent to
\[
        u_t+uu_x=-c\theta_x .
\]
Multiplying by $u$ and using $\theta_t+u\theta_x=0$ gives the local conservation law
\begin{equation}\label{likeCKY3}
        \frac12(u^2)_t-c\theta_t=-\frac13(u^3)_x .
\end{equation}
Consequently, for periodic data,
\begin{equation}\label{likeCKY4}
        E(t):=\int_{-L/2}^{L/2}\left(\frac12u^2-c\theta\right)\,dx
\end{equation}
is conserved as long as the solution remains smooth.

Define
\begin{equation}\label{Ft}
        F(t):=c\int_0^{L/2}\frac{\omega(x,t)}{x}\,dx,
        \qquad
        G(t):=c\int_0^{L/2}\frac{\theta_x(x,t)}{x}\,dx .
\end{equation}
The proof below uses $F(t)$.

\begin{theorem}\label{G5blowup}
Let $\omega_0,\theta_0\in C^\infty(\mathbb R/L\mathbb Z)$.  Assume that $\omega_0$ and $\theta_{0x}$ are odd with respect to both $x=0$ and $x=L/2$, and that
\[
        \omega_0(x)\ge0,
        \qquad \theta_{0x}(x)\ge0,
        \qquad 0\le x\le L/2,
\]
and $F(0)>0$.  Then any smooth solution of \eqref{likeCKY} with these data blows up in finite time.  More precisely, its maximal smooth lifespan satisfies
\[
        T_{\max}\le \frac{L}{F(0)} .
\]
In the normalization $L=2$, this gives $T_{\max}\le 2/F(0)$.
\end{theorem}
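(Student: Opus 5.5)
The plan is a Riccati inequality for $F(t)$: show $F'(t)\ge F(t)^2/L$ for as long as the solution stays smooth, and integrate. Throughout, take a smooth solution on $[0,T)$. The first step is to check that the symmetry and sign hypotheses persist in time.

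\emph{Symmetry.} Since $u=-c\omega$ and the equations are invariant under $x\mapsto -x$ (with $\omega\mapsto-\omega$, $u\mapsto -u$, $\theta\mapsto\theta$) and under the analogous reflection about $x=L/2$, uniqueness of smooth solutions shows that $\omega$, $u$ and $\theta_x$ remain odd about $x=0$ and $x=L/2$. In particular $\omega$ and $u$ vanish at $x=0$ and $x=L/2$.

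\emph{Sign of $\theta_x$.} Because $u$ vanishes at $0$ and $L/2$, the characteristics $\dot X=u(X,t)$ preserve $[0,L/2]$. Differentiating the $\theta$-equation gives $(\theta_x)_t+u(\theta_x)_x=-u_x\theta_x$. Along characteristics this is a linear ODE, so $\theta_x(X(t),t)=\theta_{0x}(X(0))\exp\bigl(-\int_0^t u_x\,ds\bigr)\ge0$ on $[0,L/2]$. Consequently $G(t)\ge0$. Smoothness together with $\omega(0,t)=0$ makes $\omega/x$ bounded near $0$, so $F(t)$ is finite and differentiable.

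Next, differentiate $F$. Using $\omega_t=\theta_x-u\omega_x=\theta_x+c\,\omega\omega_x$,
\[
F'(t)=G(t)+c^2\int_0^{L/2}\frac{(\omega^2/2)_x}{x}\,dx
= G(t)+\Bigl[\frac{c^2\omega^2}{2x}\Bigr]_{0}^{L/2}+\frac{c^2}{2}\int_0^{L/2}\frac{\omega^2}{x^2}\,dx .
\]
The boundary term vanishes. At $L/2$ this holds because $\omega(L/2)=0$; at $0$ it holds because $\omega^2/x=O(x)$. Since $G\ge0$,
\[
F'(t)\ge\frac{c^2}{2}\int_0^{L/2}\frac{\omega^2}{x^2}\,dx .
\]

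Then close the inequality with Cauchy--Schwarz:
\[
F^2=c^2\Bigl(\int_0^{L/2}\frac{\omega}{x}\,dx\Bigr)^2\le c^2\,\frac L2\int_0^{L/2}\frac{\omega^2}{x^2}\,dx,
\]
which gives $F'\ge F^2/L$. Since $F(0)>0$, $F$ is nondecreasing and stays positive. Comparison with the Riccati ODE yields $F(t)\ge F(0)/(1-tF(0)/L)$, which is infinite at $t=L/F(0)$. Any smooth solution has $F$ finite, so the solution cannot remain smooth past $L/F(0)$, i.e.\ $T_{\max}\le L/F(0)$. For $L=2$ this is $T_{\max}\le 2/F(0)$.

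The main obstacle I expect is justifying the persistence of the symmetry and sign conditions for $\theta_x$. This needs uniqueness of smooth solutions to the local transport system, i.e.\ a Burgers-type equation for $u$ coupled with transport of $\theta$, which is standard. It also needs the boundary terms at $x=0$ to be handled carefully. The hypothesis $\omega_0\ge0$ is not essential; only $F(0)>0$ and $\theta_{0x}\ge0$ enter the argument.
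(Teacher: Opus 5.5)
Your argument is the paper's proof step for step. It has the same ingredients: persistence of the endpoint parities and of $\theta_x\ge0$ on $[0,L/2]$ along characteristics, the integration by parts giving $\dot F\ge G+\frac{c^2}{2}\int_0^{L/2}\omega^2/x^2\,dx$ with $G\ge0$, Cauchy--Schwarz, and the Riccati inequality $\dot F\ge F^2/L$. Your remark that $\omega_0\ge0$ is never used is also correct. The paper propagates $\omega\ge0$, but the Cauchy--Schwarz step does not need it.

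The one step I would not accept as written is the one you flagged yourself: uniqueness of smooth solutions is \emph{not} standard for this system.

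\emph{Why it is not standard.} In the variables $(u,\theta)$ the system reads $u_t+uu_x+c\theta_x=0$, $\theta_t+u\theta_x=0$. Its coefficient matrix has the double eigenvalue $u$ with a Jordan block, so the system is only weakly hyperbolic, not Burgers-like. For the difference of two solutions you get
$\delta u_t+u_1\delta u_x+u_{2x}\delta u=-c\,\delta\theta_x$ and $\delta\theta_t+u_1\delta\theta_x=-\theta_{2x}\delta u$. Bounding $\delta u$ in $H^s$ therefore needs $\delta\theta$ in $H^{s+1}$, which in turn needs $\delta u$ in $H^{s+1}$, so the energy method loses a derivative and does not close. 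Heuristically, freezing coefficients, a mode at frequency $k$ grows at a rate of order $\sqrt{|k|}$ wherever $\theta_x\neq0$. The paper's parity check tacitly relies on the same uniqueness.

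\emph{The fix.} You do not need uniqueness, because the rest of your proof only uses $u=\omega=\theta_x=0$ at $x=0$ and $x=L/2$ for all $t$. You can get these directly from the given smooth solution.
Let $X_0$ solve $\dot X_0=u(X_0,t)$, $X_0(0)=0$, and set $y(t)=\theta_x(X_0(t),t)$.
Then $\dot y=-u_x(X_0(t),t)\,y$ with $y(0)=\theta_{0x}(0)=0$, so $y\equiv0$.
Hence $\frac{d}{dt}\omega(X_0(t),t)=y=0$, which gives $\omega(X_0(t),t)=\omega_0(0)=0$.
Therefore $\dot X_0=-c\,\omega(X_0,t)=0$ and $X_0\equiv0$.
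The same argument works at $L/2$.

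Since $x=0$ and $x=L/2$ are then stationary characteristics of a smooth velocity field, $[0,L/2]$ is invariant and the rest of your argument goes through unchanged.
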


\begin{remark}
Choi--Hou--Kiselev--Luo--\v{S}ver\'ak--Yao~\citep{CHKLSY2015} observed that, for the periodic Hou--Luo model, the main quantity in their proof is essentially the functional $G(t)$ in \eqref{Ft}.  They also noted that, in view of the method of C\'ordoba--C\'ordoba--Fontelos~\citep{CCF2005}, a proof based on $F(t)$ would look natural.  For the local Biot--Savart law in \eqref{likeCKY}, the $F(t)$ argument closes directly.
\end{remark}

\begin{proof}[Proof of Theorem~\ref{G5blowup}]
The assumptions on $\theta_{0x}$ imply that $\theta_0$ is even with respect to both endpoints.  These endpoint symmetries are preserved by \eqref{likeCKY}: if $\omega$ is odd and $\theta$ is even with respect to an endpoint, then $u=-c\omega$ is odd, $u\omega_x$ and $\theta_x$ are odd, and $u\theta_x$ is even.  In particular, $u(0,t)=u(L/2,t)=0$, so the interval $[0,L/2]$ is invariant under the characteristic flow.  Moreover,
\[
        (\theta_x)_t+u(\theta_x)_x=-u_x\theta_x,
        \qquad
        \omega_t+u\omega_x=\theta_x .
\]
Thus $\theta_x$ preserves its sign along characteristics, and then $\omega$ remains nonnegative on $[0,L/2]$ as long as the smooth solution exists.  Since $\omega$ is odd at both endpoints, $\omega(0,t)=\omega(L/2,t)=0$.

Differentiating $F(t)$ and using $u=-c\omega$ gives
\begin{align*}
\dot F(t)
&=c\int_0^{L/2}\frac{\omega_t}{x}\,dx                                      \\
&=c\int_0^{L/2}\frac{\theta_x}{x}\,dx
  +c^2\int_0^{L/2}\frac{\omega\omega_x}{x}\,dx                              \\
&=c\int_0^{L/2}\frac{\theta_x}{x}\,dx
  +\frac{c^2}{2}\left[\frac{\omega^2}{x}\right]_0^{L/2}
  +\frac{c^2}{2}\int_0^{L/2}\frac{\omega^2}{x^2}\,dx .
\end{align*}
The boundary term vanishes because $\omega$ is smooth and odd at $0$ and $L/2$, while the first integral is nonnegative.  Hence
\begin{equation}\label{dFdt}
        \dot F(t)\ge \frac{c^2}{2}\int_0^{L/2}\frac{\omega^2}{x^2}\,dx .
\end{equation}
By Cauchy--Schwarz,
\begin{equation}\label{F-cauchy}
        F(t)^2
        \le c^2\frac{L}{2}\int_0^{L/2}\frac{\omega^2}{x^2}\,dx .
\end{equation}
Combining \eqref{dFdt} and \eqref{F-cauchy}, we obtain
\begin{equation}\label{F-riccati}
        \dot F(t)\ge \frac1L F(t)^2 .
\end{equation}
Therefore
\[
        \frac1{F(t)}\le \frac1{F(0)}-\frac{t}{L}.
\]
The right-hand side reaches zero at $t=L/F(0)$.  Since $F(t)$ is finite for every smooth odd solution, the smooth solution cannot persist beyond this time.
\end{proof}

\section{Conclusion}\label{sec:conclusion}%=====================================
We derived the unified parity-reduced system $(Bm)$ from the 2D inviscid Boussinesq equations and the 3D axisymmetric Euler equations with swirl.  The squared radial variable $q=r^2$ removes the apparent axis singularity from the axisymmetric formulation and encodes the Boussinesq and Euler cases through the parameter $m=1,2$.

At the boundary $q=1$, the Taylor expansion of $(Bm)$ gives exact boundary transport equations but also reveals the normal-jet obstruction $\varphi_{qq}(x,1,t)$ in the elliptic relation.  Under the first-order normal-jet closure $\varphi_{qq}(x,1,t)=0$, the boundary jet becomes the closed unified system $(Q0)$ with local velocity law $u=-(m+2)^{-1}\omega$.  More generally, the linear closure $\varphi_{qq}(x,1,t)=a\varphi_q(x,1,t)$ gives the same closed transport system with $u=-(2a+m+2)^{-1}\omega$ whenever $2a+m+2>0$.  For these closed $(1+1)$D systems, we proved finite-time blow-up from smooth periodic data satisfying the symmetry and sign assumptions of Theorem~\ref{G5blowup}.

The result should therefore be interpreted as a rigorous blow-up theorem for a boundary-jet closure derived from $(Bm)$, not as a blow-up theorem for the full Boussinesq or Euler systems.  A natural next step is to understand whether the normal jet $\varphi_{qq}(x,1,t)$ can be controlled, closed, or coupled to higher boundary jets in a way that preserves the same boundary blow-up mechanism.

\section{Acknowledgements}\label{sec:acknowledgements}
The author thanks Prof. Zixiang Zhou of the Department of Mathematics at Fudan University and Prof. Jie Qin of the Department of Mathematics at the University of California, Santa Cruz for their continuous support and encouragement over the years.  The author also thanks colleagues and the broader PDE and fluid-dynamics community for stimulating discussions on axisymmetric Euler equations and CLM-type models.  The author acknowledges helpful drafting, editing, and consistency-checking assistance from OpenAI's GPT-5.5 Pro through ChatGPT during the preparation of this revision; responsibility for the mathematical correctness of the manuscript remains with the author.

\newpage
%
%---------------------------------------

%---------------------------------------
\end{document}